\newcommand{\beq}{\begin{equation}}
\newcommand{\eeq}{\end{equation}}
\newcommand{\bea}{\begin{eqnarray}}
\newcommand{\eea}{\end{eqnarray}}
\newcommand{\beas}{\begin{eqnarray*}}
\newcommand{\eeas}{\end{eqnarray*}}
\newtheorem{theorem}{Theorem}[section]
\newtheorem{assumption}[theorem]{Assumption}
\newtheorem{remark}[theorem]{Remark}
\newtheorem{example}[theorem]{Example}
\newtheorem{examples}[theorem]{Examples}
\newtheorem{foo}[theorem]{Remarks}
\newtheorem*{acknowledgement}{Acknowledgement}
\newcommand{\M}{\mathbb M}
\begin{document}

\title[Gradient bounds]{On the Cheng-Yau gradient estimate for Carnot groups and sub-Riemannian manifolds}

\author[Baudoin]{Fabrice Baudoin{$^{\star}$}}
\thanks{\footnotemark {$\star$} This research was supported in part by NSF Grant DMS-1660031.}
\address{Department of Mathematics\\
University of Connecticut\\
Storrs, CT 06269, USA} \email{fabrice.baudoin@uconn.edu}

\author[Gordina]{Maria Gordina{$^{\dagger\ddag}$}}
\thanks{\footnotemark {$\ddag$} Research was supported in part by the Simons Fellowship.}
\thanks{\footnotemark {$\dagger $} This research was supported in part by NSF Grants DMS-1405169, DMS-1712427}
\address{Department of Mathematics\\
University of Connecticut\\
Storrs, CT 06269, USA} \email{maria.gordina@uconn.edu}

\author[Mariano]{Phanuel Mariano{$^{\dagger}$}}
\address{Department of Mathematics\\
		Purdue University\\
		West Lafayette, IN 47907,  USA} \email{pmariano@purdue.edu}


\begin{abstract}
In this note we show how results in \cite{BaudoinBonnefont2016, BaudoinGarofalo2013, CoulhonJiangKoskelaSikora2017} yield the Cheng-Yau estimate on two classes of sub-Riemannian manifolds: Carnot groups and sub-Riemannian manifolds satisfying a generalized curvature-dimension inequality.
\end{abstract}

\keywords{Cheng-Yau estimate, Carnot groups, sub-Riemannian manifolds,  curvature-dimension inequality}

\maketitle

\tableofcontents

\subjclass{Primary 58J35; Secondary  53C17, 35H10}

\renewcommand{\contentsname}{Table of Contents}

\section{Introduction}

Let $M$ be a $d$-dimensional Riemannian complete non-compact manifold with the Ricci curvature bounded below by $-(d-1)K$. Let $u$ be a positive harmonic function in a Riemannian ball $B(x_{0}, 2r)$, then we say that $u$ satisfies the Cheng-Yau estimate if

\begin{equation}\label{Riemannian-CY}
\sup_{B(x_{0},r)}\left|\nabla\log u\right|\leqslant C_{d}\left(\frac{1}{r}+\sqrt{K}\right),
\end{equation}
where $C_{d}$ is a global constant depending only on the dimension $d$. In particular, when $K=0$ this estimate shows that positive harmonic functions are constant. This estimate was formulated in a more general form in \cite{Yau1975a, ChengYau1975a}, and stated as in \eqref{Riemannian-CY} in \cite[Theorem 3.1]{SchoenYau1994}. Sharp versions of the Cheng-Yau inequality were given in \cite{Li-Wang2002,Munteanu2012}. The stability of \eqref{Riemannian-CY} under certain perturbations of the metric was considered in \cite{Zhang2006}

The standard curvature arguments are not easily available in the case when $M$ is replaced by a sub-Riemannian manifold.  Nevertheless, there has been significant progress in geometric analysis on sub-Riemannian manifolds in \cite{BakryBaudoinBonnefontChafai2008, BaudoinBonnefont2016, BaudoinBonnefontGarofalo2014, BaudoinGarofalo2017}. Even in the absence of a Riemannian structure,  \cite{BaudoinGarofalo2017} developed new techniques to prove a number of results which in the Riemannian setting go back to the work of Yau and Li-Yau.  The main tool in \cite{BaudoinGarofalo2017} relied on a generalized curvature-dimension inequality on a class of sub-Riemannian manifolds with transverse symmetries.

Carnot groups also form a large and interesting class of sub-Riemannian manifolds. These are Lie groups whose Lie algebra admits a stratified structure. This stratified structure allows for H\"ormander's condition \cite[Theorem 1.1]{Hormander1967a} to be satisfied. H\"ormander's theorem guarantees that the sub-Laplacian associated with the structure of a Carnot group is hypoelliptic. In particular, this gives us the existence of a smooth heat kernel for this Laplacian. Most Carnot groups do not satisfy a generalized curvature-dimension inequality, so one needs to employ different techniques than in \cite{BaudoinGarofalo2017}.

The Cheng-Yau estimate was proved in \cite[Corollary 4.6]{BanerjeeGordinaMariano2018} for the simplest non-commutative Carnot group, the Heisenberg group, using probabilistic (coupling) techniques. The purpose of this note is to show that this estimate can be proven on two classes of sub-Riemannian manifolds, namely, sub-Riemannian manifolds satisfying a generalized curvature-dimension inequality  and Carnot groups, by relying on results from \cite{CoulhonJiangKoskelaSikora2017, BaudoinBonnefont2016, BaudoinGarofalo2013}. In particular, this recovers the known fact that global non-negative harmonic functions
in these two settings have to be constant (see \cite[Theorem 5.8.1]{BonfiglioliLanconelliUguzzoniBook} and \cite[Theorem 5.1]{BaudoinBonnefontGarofalo2014} in more generality).

\section{Carnot groups}
\subsection{Preliminaries}

We recall that a Carnot group of step $N$ is a simply connected Lie group $\mathbb{G}$ whose Lie algebra can be written as
\[
\mathfrak{g}=\mathcal{V}_{1}\oplus\cdots\oplus\mathcal{V}_{N},
\]
where
\[
\left[\mathcal{V}_{i},\mathcal{V}_{j}\right]=\mathcal{V}_{i+j}
\]
and $\mathcal{V}_{k}=0$ for $k>N$. In particular, Carnot groups are nilpotent.

Let $V_{1}, \dots, V_{d}$ be a linear basis for the vector space $\mathcal{V}_{1}$. The $V_i$s can be viewed as left-invariant vector fields on $\mathbb{G}$. The left-invariant sub-Laplacian on $\mathbb{G}$ is the operator

\begin{equation}\label{subL-Carnot}
L=\sum_{i=1}^{d}V_{i}^{2}.
\end{equation}
Let $\mu$ be the bi-invariant Haar measure on $\mathbb{G}$. Since Carnot groups are complete  the operator $L$ in \eqref{subL-Carnot} is essentially self-adjoint on $L^2\left(\mathbb{G},\mu\right)$, with domain being the space of smooth and compactly supported functions $f:\mathbb{G}\to\mathbb{R}$ denoted by $C_c^\infty \left(\mathbb{G} \right)$. We abuse notation and denote by $L$ the Friedrichs extension of this operator to a unique non-positive self-adjoint operator on $L^2\left(\mathbb{G},\mu\right)$. Then the heat semigroup $\left(P_{t}\right)_{t\geqslant 0}$ on $\mathbb{G}$ can be defined through the spectral theorem. As $L$ is hypoelliptic, $P_{t}$ admits a positive smooth fundamental solution to the heat equation called the heat kernel $p_{t}\left( g,g^{\prime} \right)$.

Denote by $\nabla = \left( V_1,\dots,V_d \right)$ the gradient determined by this basis, and denote by $\Vert \cdot \Vert$ the usual Euclidean norm. The \emph{carr\'e du champ}
operator of $L$ is defined by
\[
\Gamma(f,f):=\frac{1}{2}\left(Lf^2-2fLf \right) = \left\Vert \nabla f\right\Vert ^{2}=\sum_{i=1}^{d}\left(V_{i}f\right)^{2},
\]
and it is often thought of as the square of the \emph{length of the gradient $\nabla$}. We let $d$ be the  Carnot-Carath\'eodory distance on $\mathbb{G}$ making  $\left(\mathbb{G}, d\right)$ a metric space. We refer the reader to \cite{BonfiglioliLanconelliUguzzoniBook} for more details and results on Carnot groups.

\subsection{The Cheng-Yau estimate}

We say a function $u:\mathbb{G}\to \mathbb{R}$ is \emph{harmonic} in a domain $D\subset \mathbb{G}$ if $Lu=0$ on $D\subset \mathbb{G}$.

\begin{theorem}\label{CarnotCY}
If $u$ is any positive harmonic function for $L$ in a ball $B\left(x,2r\right)\subset\mathbb{G}$,
then there exists a constant $C>0$ not dependent on $u, r$
and $x$ such that
\begin{equation}\label{eq:CY}
\sup_{B(x,r)}\left\Vert \nabla\log u \right\Vert \leqslant\frac{C}{r}.
\end{equation}
Moreover, if $u$ is a positive harmonic function on $\mathbb{G}$, then $u$ must be equal to a constant.
\end{theorem}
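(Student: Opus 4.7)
To prove Theorem \ref{CarnotCY}, the natural approach is \emph{not} through a Bochner-style argument, since on a general Carnot group there is no Bakry-\'Emery curvature lower bound and the sub-Laplacian $L$ does not satisfy a true curvature-dimension inequality in the sense of \cite{BaudoinGarofalo2017}. Instead, the plan is to combine two external ingredients. The first is the pointwise gradient bound for the heat semigroup on Carnot groups established in \cite{BaudoinBonnefont2016}, which provides an inequality of the schematic form
\[ \left\Vert \nabla P_t f \right\Vert(x) \leqslant C\, P_t\!\left(\left\Vert \nabla f \right\Vert\right)(x), \qquad t>0, \]
together with the well-known two-sided Gaussian estimates on the heat kernel $p_t$, global volume doubling, and the scale-invariant $L^2$ Poincar\'e inequality, all standard for Carnot groups. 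The second ingredient is the abstract regularity theorem of \cite{CoulhonJiangKoskelaSikora2017}, which asserts that on a Dirichlet metric measure space satisfying these three structural properties every harmonic function $u$ on $B(x,2r)$ enjoys a local Lipschitz estimate
\[ \sup_{B(x,r)} \left\Vert \nabla u \right\Vert \leqslant \frac{C}{r} \sup_{B(x,2r)} |u|. \]

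Given these inputs, the proof of \eqref{eq:CY} will proceed in a few short steps. First, I would verify that each of the three structural hypotheses of \cite{CoulhonJiangKoskelaSikora2017} is satisfied on $\mathbb{G}$: doubling and Poincar\'e are classical, and the gradient heat-semigroup estimate is precisely the output of \cite{BaudoinBonnefont2016}. Second, I would apply the abstract Lipschitz estimate to the positive harmonic function $u$ on $B(x,2r)$ to obtain $\sup_{B(x,r)} \left\Vert \nabla u \right\Vert \leqslant (C/r)\sup_{B(x,2r)} u$. Third, I would invoke the elliptic Harnack inequality for $L$-harmonic functions on $\mathbb{G}$, which follows from doubling, Poincar\'e and the heat kernel bounds and is automatic in this setting, to replace $\sup_{B(x,2r)} u$ by $C' u(y)$ at each $y \in B(x,r)$. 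Dividing the pointwise Lipschitz bound by $u(y)$ and using the identity $\nabla \log u = \nabla u / u$ then yields \eqref{eq:CY}. The Liouville-type conclusion is then immediate: for a positive harmonic function on all of $\mathbb{G}$, letting $r\to\infty$ in \eqref{eq:CY} forces $\nabla \log u \equiv 0$ and hence $u$ is constant.

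The principal obstacle I expect is essentially one of bookkeeping rather than analysis: making sure that the precise formulation of the heat semigroup gradient bound in \cite{BaudoinBonnefont2016} matches, up to constants and time rescalings, the hypothesis actually required by \cite{CoulhonJiangKoskelaSikora2017}, which is frequently phrased in terms of $\left\Vert \nabla_x p_t(x,\cdot) \right\Vert$ rather than the semigroup acting on $\left\Vert \nabla f \right\Vert$. In the presence of doubling and Gaussian heat kernel bounds these two formulations are known to be equivalent, so no genuinely new analytic input should be needed --- only careful cross-referencing of hypotheses between the two papers. Once that is done, the estimate \eqref{eq:CY} and the ensuing Liouville theorem follow essentially for free.
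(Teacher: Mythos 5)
Your overall architecture --- verify the structural hypotheses of \cite{CoulhonJiangKoskelaSikora2017} (doubling, Poincar\'e, Gaussian upper bounds), feed in a gradient estimate for the heat semigroup, apply their Theorem 1.2 to get the local Lipschitz bound for harmonic functions, upgrade to the Cheng--Yau form via Harnack, and let $r\to\infty$ for the Liouville statement --- is exactly the paper's. But your first ingredient is misidentified, and this is a genuine gap rather than bookkeeping. The pointwise commutation bound $\left\Vert \nabla P_t f\right\Vert \leqslant C\, P_t\left(\left\Vert \nabla f\right\Vert\right)$ is \emph{not} what \cite{BaudoinBonnefont2016} proves; this $L^1$-type gradient bound is a well-known open problem on general Carnot groups (it is known only in special cases such as the Heisenberg group, by H.-Q.~Li). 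What \cite[Proposition 2.5]{BaudoinBonnefont2016} actually provides is the \emph{reverse Poincar\'e inequality}
\[
\left\Vert \nabla P_{t}f(g)\right\Vert^{2} \leqslant \frac{C}{t}\left(P_{t}f^{2}(g)-\left(P_{t}f\right)^{2}(g)\right),
\]
which is strictly weaker and is proved by a different, integrated argument. This is the estimate you need: combined with the trivial bound $P_t f^2 \leqslant \Vert f\Vert_{L^\infty}^2$ it gives $\left\Vert \nabla P_t\right\Vert_{\infty\to\infty}\leqslant C/\sqrt{t}$, which is precisely condition (iii) of \cite[Theorem 1.2]{CoulhonJiangKoskelaSikora2017}; that theorem then yields $(RH_\infty)$, and their Lemma 2.3 (which packages the Harnack step you describe) yields \eqref{eq:CY}.

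Note also that even if the commutation bound you cite were available, it would not directly supply the hypothesis of \cite{CoulhonJiangKoskelaSikora2017}: by itself it only gives $\Vert\nabla P_t f\Vert_\infty\leqslant C\Vert\nabla f\Vert_\infty$, with no $t^{-1/2}$ decay from $L^\infty$ data. One would still have to run the Bakry--Ledoux interpolation $P_t(f^2)-(P_tf)^2 = 2\int_0^t P_s\left(\Gamma(P_{t-s}f)\right)ds$ to deduce the reverse Poincar\'e inequality, a step you do not mention. The correct fix is therefore to replace your first ingredient by the reverse Poincar\'e inequality of \cite{BaudoinBonnefont2016} and derive $\left\Vert \nabla P_t\right\Vert_{\infty\to\infty}\leqslant C/\sqrt{t}$ from it; the remainder of your outline (Harnack, division by $u$, letting $r\to\infty$) then goes through as in the paper.
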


\begin{proof}

In the proof, $C$ will denote a generic positive constant that does not depend on $u, r$ and $x$ whose value might change from line to line. First recall the reverse Poincar\'e inequality for the heat semigroup obtained for Carnot groups in \cite[Proposition 2.5]{BaudoinBonnefont2016}
\begin{equation}\label{eq:ReversePoincare}
\left\Vert \nabla P_{t}f(g)\right\Vert^{2} \leqslant \frac{C}{t}\left(P_{t}f^{2}(g)-\left(P_{t}f\right)^{2}(g)\right)
\end{equation}
for functions $f\in C_c^\infty \left(\mathbb{G}\right)$. Note that for functions $f\in L^{\infty}\left(\mathbb{G}\right)$ we have
\begin{equation}\label{Eq2}
P_{t}f^{2}(g) \leqslant \left\Vert f\right\Vert _{L^{\infty}(G)}^{2},
\end{equation}
therefore combining \eqref{eq:ReversePoincare} and \eqref{Eq2} we obtain

\begin{equation}\label{Eq3}
\left\Vert \nabla P_{t}f(g)\right\Vert ^{2} \leqslant \frac{C}{t}\left\Vert f\right\Vert_{L^{\infty}(G)}^{2}.
\end{equation}
Taking a square root in \eqref{Eq3} implies that
\begin{equation}
\left\Vert \nabla P_{t}\right\Vert _{\infty\to\infty}\leqslant\frac{C}{\sqrt{t}}.\label{eq:Boundness}
\end{equation}
Applying \cite[Theorem 1.2]{CoulhonJiangKoskelaSikora2017}, in particular that $(iii)$ implies $(i)$,
shows that  \eqref{eq:Boundness} implies that there exists a $C>0$
such that, for every ball $B(x,r)$ and every function
$u$ that is harmonic in $B(x,2r)$ we have

\begin{equation}\label{eq:RHinfty}
\left\Vert \nabla u\right\Vert_{L^{\infty}(B(x,r))} \leqslant \frac{C}{r\mu(B(x,2r))}\int_{B(x,2r)}\left|u\right|d\mu.
\end{equation}
Then we can apply \cite[Lemma 2.3]{CoulhonJiangKoskelaSikora2017} to show that \eqref{eq:RHinfty}
implies the Cheng-Yau estimate \eqref{eq:CY}.

We rely on results in \cite{CoulhonJiangKoskelaSikora2017} that require several assumptions that are satisfied for Carnot groups as follows. Their first assumption is that the underlying space is a non-compact doubling Dirichlet metric measure space. The space $\mathbb{G}$ is doubling since by \cite[Proposition 11.15]{HajlaszKoskela2000} (or more classically by \cite{Karidi1994,FollandSteinBook1982}) there exists a $C>0$ independent of $x\in\mathbb{G} , r>0$ such that $\left|B(x,r)\right|=Cr^{Q}$ where $Q=\sum_{j=1}^N j \dim \left(\mathcal{V}_j\right)$ is the homogeneous dimension of the $\mathbb{G}$. Here $\left|E\right|$ denotes the Lebesgue measure of the set $E$ and recall that the Haar measure $\mu$ is Lebesgue measure up to a constant. We also have that $\mathbb{G}$ is Dirichlet space as described in \cite[Section 3, pp.233-234]{Sturm1995b}. This is actually true in general for H\"ormander's type operators with bounded measurable coefficients on Lie groups having polynomial volume growth in the sense of \cite{Saloff-CosteStroock1991}. Another ingredient in \cite{CoulhonJiangKoskelaSikora2017} is upper Gaussian bounds on the heat kernel which follow from \cite[Theorem VIII2.9]{VaropoulosBook1992}. The space also supports a local scale-invariant $L^{2}$-Poincar\'e inequality by \cite[Proposition 11.17]{HajlaszKoskela2000}.

Finally, if $u$ is a positive harmonic function on all of $\mathbb{G}$, taking $r\to\infty$ in \eqref{eq:CY} gives us that $u$ must be constant.

\end{proof}

\section{Sub-Riemannian manifolds}

\subsection{Preliminaries}
In this section we study the setting similar to \cite{BaudoinBonnefontGarofalo2014}. We state relevant details here for completeness. Let $\left(\mathbb{M}, \mu\right)$ be a measure space, where $\mathbb{M}$ is an $n$-dimensional $C^{\infty}$ connected manifold endowed with a smooth measure $\mu$.  Recall (e.g. \cite[p.85]{Tao5thHilbertBook}) that the measure  $\mu$ on a smooth  manifold $\mathbb{M}$ is called a \emph{smooth measure} if $\mu$ is a Radon measure which has a smooth Radon-Nikodym derivative with respect to the Lebesgue measure when viewed in coordinates, that is, for any smooth coordinate chart $\varphi: U \longrightarrow V$, $U\subset M$, $V \subset \mathbb{R}^{n}$, the pushforward measure $\varphi_{\ast}\left( \mu \right)$ has a smooth Radon-Nikodym derivative with respect to the Lebesgue measure on $V$. Let $L$ be a second order diffusion operator on $\mathbb{M}$ which is locally subelliptic (in the sense of  \cite{FeffermanSanchez-Calle1986, JerisonSanchez-Calle1987}). We refer to \cite[Section 1]{BaudoinLevico2018} for a detailed account on properties of locally  subelliptic operators and associated distances that we are going to use in the sequel. In addition, we assume that
\begin{align*}
 & L1=0,
 \\
 & \int_{\mathbb{M}}fLgd\mu=\int_{\mathbb{M}}gLfd\mu,
 \\
 & \int_{\mathbb{M}}fLf \leqslant 0
\end{align*}
for every $f, g \in C_{c}^{\infty}\left(\mathbb{M}\right)$, where as before $C_{c}^{\infty}\left(\mathbb{M}\right)$ denotes the space of smooth compactly supported functions on $\mathbb{M}$.

The space $\mathbb{M}$ is endowed with a \emph{carr\'e du champ} operator
defined by
\[
\Gamma(f,g):=\frac{1}{2}\left(L\left(fg\right)-fLg-gLf\right), \hskip0.05in f, g\in C^{\infty}\left(\mathbb{M}\right).
\]
We denote $\Gamma(f)=\Gamma(f,f)$. It is not too hard to see that $\Gamma(f)\geqslant 0$ for all $f\in C^{\infty}\left(\mathbb{M}\right)$. 
We will also assume the existence of a symmetric, first-order differential
bilinear form $\Gamma^{Z}:C^{\infty}\left(\mathbb{M}\right)\times C^{\infty}\left(\mathbb{M}\right)\to C^{\infty}\left(\mathbb{M}\right)$
that satisfies
\begin{align*}
\Gamma^{Z}\left(fg,h\right) & =f\Gamma^{Z}\left(g,h\right)+g\Gamma^{Z}\left(f,h\right),\\
\Gamma^{Z}(f) & =\Gamma^{Z}(f,f)\geqslant0,
\end{align*}
for all $f,g,h\in C^{\infty}\left(\mathbb{M}\right)$.
Given the first order bi-linear forms $\Gamma$
and $\Gamma^{Z}$ on $\mathbb{M}$, we can introduce the following
second-order differential forms
\[
\Gamma_{2}(f,g)=\frac{1}{2}\left(L\Gamma(f,g)-\Gamma\left(f,Lg\right)-\Gamma\left(g,Lf\right)\right)
\]
and
\[
\Gamma_{2}^{Z}(f,g)=\frac{1}{2}\left(L\Gamma^{Z}(f,g)-\Gamma^{Z}\left(f,Lg\right)-\Gamma^{Z}\left(g,Lf\right)\right).
\]
Similar to $\Gamma$, we will use the notation $\Gamma_{2}(f):=\Gamma_{2}(f,f),\Gamma_{2}^{Z}(f):=\Gamma_{2}^{Z}(f,f)$.

We suppose the following assumptions to hold throughout this section.
\begin{enumerate}[label=(\Roman*)]
\item \label{A.1} There exists an increasing sequence $h_{k}\in C_{c}^{\infty}\left(\mathbb{M}\right)$
such that $h_{k}\uparrow1$ on $\mathbb{M}$, and
\[
\left\Vert \Gamma\left(h_{k}\right)\right\Vert _{\infty}+\left\Vert \Gamma^{Z}\left(h_{k}\right)\right\Vert _{\infty}\to0,\,\,\,\mbox{as }k\to\infty.
\]

\item\label{A.2} For any $f\in C^{\infty}\left(\mathbb{M}\right)$ one has
\[
\Gamma\left(f,\Gamma^{Z}\left(f\right)\right)=\Gamma^{Z}\left(f,\Gamma\left(f\right)\right).
\]

\item\label{A.3} The \emph{generalized curvature-dimension inequality} $CD\left(\rho_{1},\rho_{2},\kappa,d\right)$ is satisfied with $\rho_{1}\geqslant0$. That is, there exist constants $\rho_{1}\geqslant 0,\rho_{2}>0,\kappa\geqslant0$, and $d \geqslant 2$ such that the following inequality holds
\[
\Gamma_{2}\left(f\right)+\nu\Gamma_{2}^{Z}\left(f\right) \geqslant \frac{1}{d}\left(Lf\right)^{2}+\left(\rho_{1}-\frac{\kappa}{\nu}\right)\Gamma\left(f\right)+\rho_{2}\Gamma^{Z}\left(f\right),
\]
for all $f\in C^{\infty}\left(\mathbb{M}\right)$ and every $\nu>0$.

\item \label{A.4} The heat semigroup generated by $L$, which will be denoted $P_t$, is stochastically complete, that is, for $t\geqslant 0, P_t 1=1$ and for every $f\in C_c^\infty(\mathbb{M})$ and $T \geqslant 0$, one has
\[
\sup_{t\in [0,T]}\left\Vert\Gamma \left(P_t f \right)  \right\Vert_\infty + \left\Vert\Gamma^Z\left(P_t f \right)  \right\Vert_\infty < +\infty.
\]
\item \label{A.5} Given any two points $x,y\in \mathbb{M}$, there exists a subunit curve (in the sense of \cite{FeffermanPhong1983}), joining them.

\item \label{A.6} The metric space $(\mathbb{M},d)$ is complete with respect to the intrinsic distance defined by
\begin{equation}\label{distance1}
d(x,y) := \sup \left\lbrace | f(x)-f(y)| : f\in C^\infty(\mathbb{M}), \left\Vert \Gamma\left(f\right)\right\Vert_{\infty}\leqslant 1 \right\rbrace,
\end{equation}
for all $x, y\in\mathbb{M}$ and where we define $\left\Vert g\right\Vert_{\infty}=\operatorname{ess}\sup_{\mathbb{M}}\left|g\right|$.
\end{enumerate}

Note that by \cite[Lemma 5.29]{CarlenKusuokaStroock1987} and \cite[Equation (2.4)]{BaudoinBonnefontGarofalo2014} we know that Assumption \ref{A.5} implies that $d$ is indeed a metric  on $\mathbb{M}$.

As a consequence of Assumption  \ref{A.6}, the  operator $L$ is essentially self-adjoint on $C_{c}^{\infty}\left(\mathbb{M}\right)$ ( e.g. \cite[Proposition 1.20, Proposition 1.21]{BaudoinLevico2018}).  Thus, the pre-Dirichlet form $\mathcal{E}(f,g)$ defined on $C_{c}^{\infty}\left(\mathbb{M}\right)$ by
\[
\mathcal{E}(f,g)=\int_{\mathbb{M}} \Gamma(f,g)d\mu,
\]
has a unique closure as a Dirichlet form, and the generator of this Dirichlet form is the Friedrichs  extension of $L$. We define the Sobolev space $W^{1,2}\left(\mathbb{M}\right)$ to be the domain $\mathcal{D}$ of $\mathcal{E}$ with the norm on $W^{1,2}\left(\mathbb{M}\right)$  given by
\[
\| f \|_{W^{1,2}\left(\mathbb{M}\right)} = \sqrt{\left\Vert f \right\Vert_{2}^{2}+\mathcal{E}\left(f, f\right)}.
\]
It is a consequence of Assumption \ref{A.3} that the metric measure space $(\M, d, \mu)$ satisfies the volume doubling property and supports a scale invariant $L^{2}$-Poincar\'e inequality on metric balls (see \cite{BaudoinBonnefontGarofalo2014}). In particular, by \cite[Chapter 8 ]{HKST15} locally Lipschitz continuous functions form a dense subclass in $W^{1,2}\left(\mathbb{M}\right)$.

For an open set $U\subset\mathbb{M}$, one can define the local Sobolev space $W_{\text{loc}}^{1,2}\left(U\right)$ to be the space of all functions $f$ such that for any compact set $K\subset U$ there exists $F\in\mathcal{D}$ satisfying $f=F$ a.e. on $K$. For
each $p\geqslant 2$, we define $W^{1, p}(U)$ to be space of functions $f\in W_{\text{loc}}^{1,2}(U)$
satisfying $f,\sqrt{\Gamma(f)}\in L^{p}\left(U\right)$.

\subsection{Examples}\label{sec:examples}
We remark that so far the approach has been purely analytical as we have not mentioned any geometric structure of these sub-Riemannian manifolds. In fact $\mathbb{M}$ and $L$ are rather general, even though we have sub-Riemannian manifolds in mind for $\mathbb{M}$.

\subsubsection{Sum of squares operators}
We start by recalling a natural setting where assumption \ref{A.5} is satisfied. Let us consider $L$ that are \emph{sums of squares} operators in the form of
\begin{equation}\label{sumofsquares}
L=\sum_{i=1}^{m}X_i^2+X_0,
\end{equation}
where the $X_i$ are $C^\infty$ vector fields. We refer the reader to \cite{GordinaLaetsch2016a} for more details on operators of the form given in \eqref{sumofsquares} in the context of sub-Riemannian manifolds. Consider the following assumption.

\begin{assumption}(H\"{o}rmander's condition)\label{HCassump}
We will say  that $L$ satisfies \emph{H\"{o}rmander's (bracket generating) condition} if the vector fields $\left\lbrace X_1,\dots,X_m \right\rbrace$ with their Lie brackets span the tangent space $T_{x}\mathbb{M}$ at every point $x \in \mathbb{M}$.
\end{assumption}
H\"{o}rmander's condition guarantees analytic and topological properties such as hypoellipticity of $L$ and topological properties of $\mathbb{M}$. The Chow-Rashevski theorem says that H\"{o}rmander's condition is sufficient to ensure that any two points in $\mathbb{M}$ can be connected by a finite length sub-unit curve. Thus, operators $L$ of the form \eqref{sumofsquares} that satisfy H\"{o}rmander's condition  automatically satisfy assumption \ref{A.5}.

\subsubsection{Other examples}
We note that the assumptions \ref{A.1}-\ref{A.6} are satisfied for a large class of sub-Riemannian manifolds. Such a class includes all Sasakian manifolds whose horizontal Webster-Tanaka-Ricci curvature is bounded below, a wide subclasses of principal bundles over Riemannian manifolds whose Ricci curvature is bounded below, and Carnot groups of step 2. We remark that in general we do not know if  $CD\left(\rho_{1},\rho_{2},\kappa,d\right)$  is satisfied for Carnot groups of an arbitrary step. This shows the need to treat Carnot groups separately in Theorem \ref{CarnotCY}. We refer the reader to \cite{BaudoinGarofalo2017} for a comprehensive treatment on sub-Riemannian manifolds satisfying assumptions \ref{A.1}-\ref{A.6}.

\subsection{The Cheng-Yau estimate}
Recall that we say a function $u:\mathbb{M}\to \mathbb{R}$ is \emph{harmonic} in a domain $D\subset \mathbb{M}$ if $Lu=0$ on $D\subset \mathbb{M}$. We can now state the main result of this section.

\begin{theorem}\label{ChengYau-Transverse}
Suppose assumptions \ref{A.1}-\ref{A.6} hold for $\mathbb{M}$ and $L$. If $u$ is any positive harmonic function for $L$ in a ball $B\left(x, 2r\right)\subset\mathbb{M}$,
then there exists a constant $C>0$ not dependent on $u,r$
and $x$ such that
\begin{equation}
\sup_{B(x,r)} \sqrt{\Gamma\left(\log u \right)} \leqslant\frac{C}{r}.\label{eq:CY-1}
\end{equation}
Moreover, if $u$ is any positive harmonic function on $\mathbb{M}$,
then $u$ must be equal to a constant.
\end{theorem}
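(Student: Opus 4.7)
The plan is to mirror the proof of Theorem \ref{CarnotCY}, with Assumption \ref{A.3} playing the role that the Carnot group structure played there. The first step is to establish the gradient bound on the heat semigroup
\[
\left\Vert \sqrt{\Gamma(P_t f)}\right\Vert_{\infty} \leqslant \frac{C}{\sqrt{t}} \left\Vert f \right\Vert_{\infty},
\]
valid for $f \in C_c^\infty(\mathbb{M})$ and $t \in (0,T]$. This should follow by extracting from \cite{BaudoinGarofalo2013} (under $CD(\rho_1,\rho_2,\kappa,d)$ with $\rho_1 \geqslant 0$, together with Assumption \ref{A.4}) a reverse Poincar\'e inequality of the form
\[
\Gamma(P_t f) \leqslant \frac{C}{t}\left(P_t f^2 - (P_t f)^2\right),
\]
and then combining it with $P_t f^2 \leqslant \left\Vert f\right\Vert_\infty^2$ exactly as in \eqref{Eq2}--\eqref{eq:Boundness} above. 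The role of Assumption \ref{A.2} here is to make the Bakry--Ledoux-type entropy/energy computation work despite the presence of the auxiliary form $\Gamma^Z$; this is the place I expect the main technical obstacle to lie, since one must verify that the appropriate interpolation on $\Phi(s) = P_s((P_{t-s}f)^2)$ admits the $\Gamma^Z$-contribution to be absorbed (by choosing $\nu$ optimally in \ref{A.3}) without spoiling the $1/t$ scaling.

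Once the semigroup gradient bound is available, the second step is to check that the hypotheses of \cite[Theorem 1.2]{CoulhonJiangKoskelaSikora2017} are met. The metric measure space $(\mathbb{M}, d, \mu)$ is a non-compact Dirichlet space by \ref{A.5}, \ref{A.6}, and the essential self-adjointness of $L$ noted after \eqref{distance1}. Volume doubling and the scale-invariant local $L^2$-Poincar\'e inequality on metric balls are provided by \cite{BaudoinBonnefontGarofalo2014} under the generalized curvature-dimension inequality \ref{A.3}, and Gaussian upper bounds on the heat kernel are likewise available in this framework. With these in hand the implication (iii)$\Rightarrow$(i) of \cite[Theorem 1.2]{CoulhonJiangKoskelaSikora2017} upgrades the gradient-semigroup bound to the reverse H\"older-type inequality
\[
\left\Vert \sqrt{\Gamma(u)}\right\Vert_{L^{\infty}(B(x,r))} \leqslant \frac{C}{r\mu(B(x,2r))}\int_{B(x,2r)}|u| \, d\mu
\]
for every function $u$ that is harmonic in $B(x,2r)$.

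The third step is to deduce \eqref{eq:CY-1} from this reverse H\"older inequality by appealing to \cite[Lemma 2.3]{CoulhonJiangKoskelaSikora2017}, applied with $u$ replaced by $\log u$ in the usual Cheng--Yau fashion (i.e.\ the lemma converts the $L^\infty$ reverse H\"older bound on harmonic functions into a multiplicative gradient estimate on positive harmonic functions). The final assertion that a globally positive harmonic function must be constant follows immediately by letting $r \to \infty$ in \eqref{eq:CY-1}, since the constant $C$ is independent of $r$, $x$, and $u$, so $\sqrt{\Gamma(\log u)} \equiv 0$ and hence $\log u$, being locally Lipschitz with vanishing horizontal gradient on a space where horizontal curves connect any two points (Assumption \ref{A.5}), must be constant.
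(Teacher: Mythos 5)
Your proposal follows essentially the same route as the paper: establish the semigroup gradient bound $\left\Vert \sqrt{\Gamma(P_t f)}\right\Vert_{\infty\to\infty}\leqslant C/\sqrt{t}$, verify the hypotheses of \cite[Theorem 1.2, Lemma 2.3]{CoulhonJiangKoskelaSikora2017} using the doubling, local $L^2$-Poincar\'e, and Gaussian upper bounds from \cite{BaudoinBonnefontGarofalo2014}, and conclude exactly as in Theorem \ref{CarnotCY}, letting $r\to\infty$ for the Liouville statement. The one place you anticipate a technical obstacle --- running the Bakry--Ledoux interpolation with the $\Gamma^Z$ term to get a reverse Poincar\'e inequality --- is not needed: the paper simply invokes \cite[Corollary 3.5]{BaudoinGarofalo2013}, which already gives the bound \eqref{eq:Boundness-1} directly with the explicit constant $C=n\sqrt{(2\kappa+\rho_{2})/(2\rho_{2})}$.
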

\begin{proof}
In the proof, $C,C_2$ will denote generic positive constants that do not depend
on $u, r$ and $x_0$, whose values might change from line to line. First we check that the assumptions of the results in \cite[Theorem
1.2, Lemma 2.3]{CoulhonJiangKoskelaSikora2017} are satisfied. The results in \cite{CoulhonJiangKoskelaSikora2017}
require the assumption that the underlying space is a non-compact
doubling Dirichlet metric measure space. Doubling is shown in $(1.11)$ of \cite[Theorem
1.5]{BaudoinBonnefontGarofalo2014}. Further, we need $\mathbb{M}$ to satisfy upper Gaussian bounds
on the heat kernel, which is given in \cite[Theorem
4.1]{BaudoinBonnefontGarofalo2014}. Finally
we need $\mathbb{M}$ to support a local $L^{2}$-Poincar\'e inequality
of the form
\begin{equation}
\frac{1}{\mu\left(B(x,r)\right)}\int_{B(x,r)}\left|f-f_{B}\right|d\mu\leqslant C_{2}r\left(\frac{1}{\mu \left(B(x,r)\right)}\int_{B(x,r)}\Gamma\left(f\right)d\mu\right)^{\frac{1}{2}},\label{P2}
\end{equation}
for some $C_{2}>0$, for every ball $B(x,r)$ and each $f\in W^{1,2}\left(B(x,r)\right)$.  Here we used the notation $f_{B}=\mu\left(B\right)^{-1}\int_{B}fd\mu$. To see this,
by (1.12) in \cite[Theorem
1.5]{BaudoinBonnefontGarofalo2014} we have that there exists constant
$C_{2}>0$, depending only on $\rho_{1},\rho_{2},\kappa,d$, for which
one has for every $x\in\mathbb{M}$ and every $r>0$,
\begin{equation}
\int_{B(x,r)}\left|f-f_{B}\right|^{2}\leqslant C_{2}r^{2}\int_{B(x,r)}\Gamma(f)d\mu,\label{WeakPoincare}
\end{equation}
for every $f\in C^{1}\left(\overline{B}(x,r)\right)$. Using Cauchy-Schwarz inequality,
followed by $(\ref{WeakPoincare})$ we have that
\begin{align*}
\frac{1}{\mu \left(B(x,r)\right)}\int_{B(x,r)}\left|f-f_{B}\right|d\mu & \leqslant\frac{1}{\mu \left(B(x,r)\right)}\left(\int_{B(x,r)}\left|f-f_{B}\right|^{2}d\mu\right)^{\frac{1}{2}}\sqrt{\mu \left(B(x,r)\right)}\\
 & \leqslant\frac{1}{\sqrt{\mu \left(B(x,r)\right)}}\left(C_{2}r^{2}\int_{B(x,r)}\Gamma(f)d\mu\right)^{\frac{1}{2}}\\
 & =\sqrt{C_{2}}r\left(\frac{1}{\mu \left(B(x,r)\right)}\int_{B(x,r)}\Gamma(f)d\mu\right)^{\frac{1}{2}}.
\end{align*}
This shows \eqref{P2} holds for all $f\in C^{1}\left(\overline{B}(x,r)\right)$. By a density argument we can show that \eqref{P2} holds for all $f\in W^{1,2}\left(B(x,r)\right)$, as needed.

To finish off the proof, we note that Corollary 3.5 in \cite{BaudoinGarofalo2013} shows
that

\begin{equation}
\left\Vert \sqrt{\Gamma\left(P_{t}f\right)}\right\Vert _{\infty\to\infty}\leqslant\frac{C}{\sqrt{t}},\label{eq:Boundness-1}
\end{equation}
where $C=n\sqrt{\frac{\left(2\kappa+\rho_{2}\right)}{2\rho_{2}}}$.
Using the estimate $(\ref{eq:Boundness-1})$, the rest of the proof
becomes similar to the proof of Theorem \ref{CarnotCY}.
\end{proof}


\begin{acknowledgement}

The authors would like to thank the anonymous referee for their careful review of the paper. P. Mariano is also grateful for helpful and insightful conversations with O. Munteanu.

\end{acknowledgement}

\bibliographystyle{amsplain}

\begin{thebibliography}{10}

\bibitem{BakryBaudoinBonnefontChafai2008}
Dominique Bakry, Fabrice Baudoin, Michel Bonnefont, and Djalil Chafa{\"{\i}},
  \emph{On gradient bounds for the heat kernel on the {H}eisenberg group}, J.
  Funct. Anal. \textbf{255} (2008), no.~8, 1905--1938. \MR{2462581
  (2010m:35534)}

\bibitem{BanerjeeGordinaMariano2018}
Sayan Banerjee, Maria Gordina, and Phanuel Mariano, \emph{Coupling in the
  {H}eisenberg group and its applications to gradient estimates}, Ann. Probab. \textbf{46} (2008), no.~6, 3275--3312. \MR{3857856}

\bibitem{BaudoinLevico2018}
Fabrice Baudoin, \emph{Geometric inequalities on riemannian and sub-riemannian
  manifolds by heat semigroups techniques},
  \url{https://arxiv.org/abs/1801.05702}, 2018.

\bibitem{BaudoinBonnefont2016}
Fabrice Baudoin and Michel Bonnefont, \emph{Reverse {P}oincar\'e inequalities,
  isoperimetry, and {R}iesz transforms in {C}arnot groups}, Nonlinear Anal.
  \textbf{131} (2016), 48--59. \MR{3427969}

\bibitem{BaudoinBonnefontGarofalo2014}
Fabrice Baudoin, Michel Bonnefont, and Nicola Garofalo, \emph{A
  sub-{R}iemannian curvature-dimension inequality, volume doubling property and
  the {P}oincar\'e inequality}, Math. Ann. \textbf{358} (2014), no.~3-4,
  833--860. \MR{3175142}

\bibitem{BaudoinGarofalo2013}
Fabrice Baudoin and Nicola Garofalo, \emph{A note on the boundedness of {R}iesz
  transform for some subelliptic operators}, Int. Math. Res. Not. IMRN (2013),
  no.~2, 398--421. \MR{3010694}

\bibitem{BaudoinGarofalo2017}
\bysame, \emph{Curvature-dimension inequalities and {R}icci lower bounds for
  sub-{R}iemannian manifolds with transverse symmetries}, J. Eur. Math. Soc.
  (JEMS) \textbf{19} (2017), no.~1, 151--219. \MR{3584561}

\bibitem{BonfiglioliLanconelliUguzzoniBook}
A.~Bonfiglioli, E.~Lanconelli, and F.~Uguzzoni, \emph{Stratified {L}ie groups
  and potential theory for their sub-{L}aplacians}, Springer Monographs in
  Mathematics, Springer, Berlin, 2007. \MR{2363343}

\bibitem{CarlenKusuokaStroock1987}
E.~A. Carlen, S.~Kusuoka, and D.~W. Stroock, \emph{Upper bounds for symmetric
  {M}arkov transition functions}, Ann. Inst. H. Poincar\'e Probab. Statist.
  \textbf{23} (1987), no.~2, suppl., 245--287. \MR{898496}

\bibitem{ChengYau1975a}
S.~Y. Cheng and S.~T. Yau, \emph{Differential equations on {R}iemannian
  manifolds and their geometric applications}, Comm. Pure Appl. Math.
  \textbf{28} (1975), no.~3, 333--354. \MR{0385749}

\bibitem{CoulhonJiangKoskelaSikora2017}
Thierry Coulhon, Renjin Jiang, Pekka Koskela, and Adam Sikora, \emph{Gradient
  estimates for heat kernels and harmonic functions},
  \url{https://arxiv.org/abs/1703.02152}, 2017.

\bibitem{FeffermanPhong1983}
C.~Fefferman and D.~H. Phong, \emph{Subelliptic eigenvalue problems},
  Conference on harmonic analysis in honor of {A}ntoni {Z}ygmund, {V}ol. {I},
  {II} ({C}hicago, {I}ll., 1981), Wadsworth Math. Ser., Wadsworth, Belmont, CA,
  1983, pp.~590--606. \MR{730094}

\bibitem{FeffermanSanchez-Calle1986}
Charles~L. Fefferman and Antonio S\'anchez-Calle, \emph{Fundamental solutions
  for second order subelliptic operators}, Ann. of Math. (2) \textbf{124}
  (1986), no.~2, 247--272. \MR{855295}

\bibitem{FollandSteinBook1982}
G.~B. Folland and Elias~M. Stein, \emph{Hardy spaces on homogeneous groups},
  Mathematical Notes, vol.~28, Princeton University Press, Princeton, N.J.;
  University of Tokyo Press, Tokyo, 1982. \MR{657581}

\bibitem{GordinaLaetsch2016a}
Maria Gordina and Thomas Laetsch, \emph{Sub-{L}aplacians on {S}ub-{R}iemannian
  {M}anifolds}, Potential Anal. \textbf{44} (2016), no.~4, 811--837.
  \MR{3490551}

\bibitem{HajlaszKoskela2000}
Piotr Haj{\l}asz and Pekka Koskela, \emph{Sobolev met {P}oincar\'e}, Mem. Amer.
  Math. Soc. \textbf{145} (2000), no.~688, x+101. \MR{1683160 (2000j:46063)}

\bibitem{HKST15}
Juha Heinonen, Pekka Koskela, Nageswari Shanmugalingam, and Jeremy~T. Tyson,
  \emph{Sobolev spaces on metric measure spaces}, New Mathematical Monographs,
  vol.~27, Cambridge University Press, Cambridge, 2015, An approach based on
  upper gradients. \MR{3363168}

\bibitem{Hormander1967a}
Lars H{\"o}rmander, \emph{Hypoelliptic second order differential equations},
  Acta Math. \textbf{119} (1967), 147--171. \MR{0222474 (36 \#5526)}

\bibitem{JerisonSanchez-Calle1987}
David Jerison and Antonio S\'anchez-Calle, \emph{Subelliptic, second order
  differential operators}, Complex analysis, {III} ({C}ollege {P}ark, {M}d.,
  1985--86), Lecture Notes in Math., vol. 1277, Springer, Berlin, 1987,
  pp.~46--77. \MR{922334}

\bibitem{Karidi1994}
Ron Karidi, \emph{Geometry of balls in nilpotent {L}ie groups}, Duke Math. J.
  \textbf{74} (1994), no.~2, 301--317. \MR{1272979}
  
\bibitem{Li-Wang2002}
Peter Li and Jiaping Wang, \emph{Complete manifolds with positive spectrum. {II}},
  J. Differential Geom. \textbf{62} (2002), 143--162. \MR{1987380}

\bibitem{Munteanu2012}
Ovidiu Munteanu, \emph{On the gradient estimate of {C}heng and {Y}au}, Proc. Amer. Math. Soc. \textbf{140} (2012), no.~4, 1437--1443. \MR{2869128}


\bibitem{Saloff-CosteStroock1991}
L.~Saloff-Coste and D.~W. Stroock, \emph{Op\'{e}rateurs uniform\'{e}ment
  sous-elliptiques sur les groupes de {L}ie}, J. Funct. Anal. \textbf{98}
  (1991), no.~1, 97--121. \MR{1111195}

\bibitem{SchoenYau1994}
R.~Schoen and S.-T. Yau, \emph{Lectures on differential geometry}, Conference
  Proceedings and Lecture Notes in Geometry and Topology, I, International
  Press, Cambridge, MA, 1994, Lecture notes prepared by Wei Yue Ding, Kung
  Ching Chang [Gong Qing Zhang], Jia Qing Zhong and Yi Chao Xu, Translated from
  the Chinese by Ding and S. Y. Cheng, Preface translated from the Chinese by
  Kaising Tso. \MR{MR1333601 (97d:53001)}

\bibitem{Sturm1995b}
Karl-Theodor Sturm, \emph{On the geometry defined by {D}irichlet forms},
  Seminar on {S}tochastic {A}nalysis, {R}andom {F}ields and {A}pplications
  ({A}scona, 1993), Progr. Probab., vol.~36, Birkh\"{a}user, Basel, 1995,
  pp.~231--242. \MR{1360279}

\bibitem{Tao5thHilbertBook}
Terence Tao, \emph{Hilbert's fifth problem and related topics}, Graduate
  Studies in Mathematics, vol. 153, American Mathematical Society, Providence,
  RI, 2014. \MR{3237440}

\bibitem{VaropoulosBook1992}
N.~Th. Varopoulos, L.~Saloff-Coste, and T.~Coulhon, \emph{Analysis and geometry
  on groups}, Cambridge University Press, Cambridge, 1992. \MR{95f:43008}

\bibitem{Yau1975a}
Shing~Tung Yau, \emph{Harmonic functions on complete {R}iemannian manifolds},
  Comm. Pure Appl. Math. \textbf{28} (1975), 201--228. \MR{0431040}
  
\bibitem{Zhang2006}
Qi S. Zhang, \emph{Stability of the {C}heng-{Y}au gradient estimate},
  Pacific J. Math. \textbf{225} (2006), 379--398. \MR{2233741}

\end{thebibliography}
\providecommand{\bysame}{\leavevmode\hbox to3em{\hrulefill}\thinspace}
\providecommand{\MR}{\relax\ifhmode\unskip\space\fi MR }
\providecommand{\MRhref}[2]{%
  \href{http://www.ams.org/mathscinet-getitem?mr=#1}{#2}
}
\providecommand{\href}[2]{#2}

\end{document}